\documentclass[11pt]{amsart}

\usepackage[T1]{fontenc}
\usepackage[utf8]{inputenc}
\usepackage{lmodern}
\usepackage{microtype}
\usepackage{amsmath,amssymb,mathtools}

\usepackage{xcolor}
\usepackage[colorlinks=true,linkcolor=blue!55!black,citecolor=blue!55!black,
            urlcolor=blue!65!black]{hyperref}
\usepackage[margin=1.15in]{geometry}

\newtheorem{theorem}{Theorem}[section]
\newtheorem{proposition}[theorem]{Proposition}
\newtheorem{lemma}[theorem]{Lemma}
\newtheorem{corollary}[theorem]{Corollary}
\theoremstyle{definition}
\newtheorem{definition}[theorem]{Definition}
\newtheorem{example}[theorem]{Example}
\theoremstyle{remark}
\newtheorem{remark}[theorem]{Remark}
\newtheorem{question}[theorem]{Question}

\newcommand{\PP}{\mathbb{P}}
\newcommand{\FF}{\mathbb{F}}

\newcommand{\ev}{\mathrm{ev}}
\newcommand{\HF}{\mathrm{HF}}
\newcommand{\ord}{\mathrm{ord}}

\usepackage[overload]{textcase}

\title{Artin–Schreier geproci configurations in projective spaces of arbitrary dimension}

\author{Luca Chiantini}
\address[Chiantini]{Dipartimento di Ingegneria dell'Informazione e Scienze Matematiche,
Universit\`a di Siena, Siena, Italy.}\email{luca.chiantini@unisi.it}

\author{{\L}ucja Farnik}
\address[Farnik]{Department of Mathematics, University of the National Education
Commission, Podchorążych~2, 30-084 Kraków, Poland}
\email{lucja.farnik@gmail.com}

\author{Giuseppe Favacchio}
\address[Favacchio]{Dipartimento di Ingegneria, Universit\`a degli Studi di Palermo,
Viale delle Scienze, 90128 Palermo, Italy}
\email{giuseppe.favacchio@unipa.it}

\author{Brian Harbourne}
\address[Harbourne]{Department of Mathematics, University of Nebraska--Lincoln,
Lincoln, Nebraska 68588,~USA}
\email{brianharbourne@unl.edu}

\author{Juan Migliore}
\address[Migliore]{Department of Mathematics, University of Notre Dame,
Notre Dame, Indiana 46556,~USA}
\email{migliore.1@nd.edu}

\author{Tomasz Szemberg}
\address[Szemberg]{Department of Mathematics, University of the National Education
Commission, Podchorążych~2, 30-084 Kraków, Poland}
\email{tomasz.szemberg@gmail.com}

\author{Justyna Szpond}
\address[Szpond]{Department of Mathematics, University of the National Education
Commission, Podchorążych~2, 30-084 Kraków, Poland}
\email{szpond@gmail.com}

\date{}

\subjclass[2020]{Primary 14N05; Secondary 14M10, 13D02}
\keywords{geproci set, general projection, complete intersection,
Artin--Schreier map, positive characteristic, interpolation}

\begin{document}
\begin{abstract}
We construct finite geproci sets in every projective dimension and in every positive characteristic by introducing $\FF_N$-Artin--Schreier configurations. In $\PP^3$, we characterize exactly when such configurations are geproci: an $\FF_N$-Artin--Schreier configuration on $q$ lines spanning $\PP^3$ is $(q,N)$-geproci if and only if $q\leq N$. We then develop a lifting construction which produces geproci sets in $\PP^n$ for every $n\geq3$.
\end{abstract}
\maketitle

\section{Introduction}

A nondegenerate finite set of points in projective space $\PP^n$ is said to be \emph{geproci of type $(d_1,\ldots,d_{n-1}$)} if its general projection to a hyperplane is a complete intersection of degrees $d_1,\ldots,d_{n-1}$. In $\PP^3$ we simply write \emph{$(a,b)$-geproci}. 

Geproci sets were introduced and studied in connection with unexpected hypersurfaces and special point configurations; see, for instance, \cite{ChiantiniMigliore2021,POLITUSbook}. Their behavior in positive characteristic is especially rich, as illustrated by \cite{Kettinger2024}. Related higher-dimensional phenomena have recently been studied in \cite{POLITUS4, FavacchioKettinger2026}.
However, so far, the only known examples of geproci sets live in $\PP^3$.

Let $K$ be an algebraically closed field of characteristic $p>0$, and let $N=p^e$. The purpose of this paper is to construct and study geproci sets in $\PP^n$ over fields of positive characteristic by introducing $\FF_N$-Artin--Schreier configurations, obtained by placing an affine copy of $\FF_N$ on each of a collection of concurrent lines.  Our construction is based on the classical Artin--Schreier phenomenon; see \cite{ArtinSchreier1927,StacksAS}.
In particular,
\[
\prod_{a\in\FF_N}(t-a)=t^N-t.
\]
An affine copy of $\FF_N$ on a line is therefore cut out by a polynomial of the form $t^N-\mu t+\nu$. The sparseness of this equation is the key point: finding a hypersurface through such orbits on several concurrent lines reduces to interpolating only two coefficient vectors.

Artin--Schreier configurations are
preserved by projection, and their interpolation problem can be expressed in terms of the Hilbert function of the set of directions of the supporting lines.

In $\PP^3$ this gives a complete characterization. The directions of the projected supporting lines form a finite set in $\PP^1$, where the interpolation problem is controlled entirely by the cardinality of the set. We prove the following.

\medskip
\noindent {\bf Theorem \ref{thm:P3-characterization}.} \emph{Let $N=p^e$, and let $Z\subset\PP^3$ be an $\FF_N$-Artin--Schreier configuration supported on $q$ concurrent lines spanning $\PP^3$. Then $Z$ is $(q,N)$-geproci if and only if $q\leq N.$}
\medskip

The sufficiency follows directly from interpolation on $\PP^1$. The converse is subtler. Varying the center of projection produces a family of interpolation conditions; a Vandermonde-type argument shows that these conditions cannot hold for a general center when $q>N$.

Our second ingredient  is a construction that passes from a geproci set in $\PP^n$ to one in $\PP^{n+1}$, which we call (Artin--Schreier) lifting. Starting with a geproci set $Z\subset\PP^n$, we take a cone over $Z$ in $\PP^{n+1}$ and place an $\FF_N$-Artin--Schreier orbit on each line of the cone. After a general projection, the lines are a complete intersection of dimension 1 in $\PP^n$, while the Artin--Schreier interpolation criterion supplies one additional hypersurface. Under a natural surjectivity condition, one additional hypersurface of degree $N$ cuts the complete-intersection curve of type $(d_1,\ldots,d_{n-1})$ in a complete intersection set of points of type $(d_1,\ldots,d_{n-1},N)$. %{\color{red} Is ``lifting" the right word?}

Since $K$ is algebraically closed, it contains the roots of $T^{p^e}-T$, and hence $\FF_{p^e}\subset K$ for every $e\geq1$. 
Allowing $N$ to vary among powers of the fixed characteristic makes it possible to iterate the lifting construction indefinitely. This gives our main result.

\medskip
\noindent {\bf Theorem \ref{thm:every-dimension}.} 
\emph{Let $K$ be an algebraically closed field of positive characteristic. Then, for every $n\geq3$, there exist infinitely many geproci sets in $\PP^n$.}
\medskip

%{\color{red} Should we give some idea of ``how many" CI types we can get? I assume that since grids are one starting point, we can get just about all possible types (maybe except for the last entry)? The next paragraph makes me wonder if this is actually true, but I thought I'd ask anyway.}

The construction is explicit. 
%If $p\geq3$, it produces in $\PP^n$ a set whose general projection is a complete intersection of type $(3,p,p^2,\ldots,p^{n-2})$. In characteristic $2$, the construction starts with an $\FF_4$-Artin--Schreier configuration and produces complete-intersection types $(3,4,8,\ldots,2^{n-1}).$
Starting from any $(d_1,\ldots,d_r)$-geproci set in $\mathbb P^{r+1}$, one may iterate the construction to obtain geproci sets in higher-dimensional projective spaces of types
$(d_1,\ldots,d_r,p^{e_1},\ldots,p^{e_s})$,
where at each step the exponent may be chosen sufficiently large for the required interpolation condition to hold.

The paper is organized as follows. In Section~\ref{sec:AS} we introduce $\FF_N$-Artin--Schreier configurations, study their behavior under projection, and establish the interpolation criterion. In Section~\ref{sec:AS-P3} we prove the characterization in $\PP^3$ as in Theorem \ref{thm:P3-characterization}. In Section~\ref{sec:lifting} we develop the lifting construction and prove Theorem \ref{thm:every-dimension}, including the separate numerical constructions in characteristic $2$ and in characteristic $p\geq3$. The positive-characteristic hypothesis is essential to our construction. Indeed, the lifting procedure relies on finite additive subgroups $\mathbb F_{p^e}$ of arbitrarily large order and on the corresponding Artin-–Schreier polynomials $T^{p^e}-T$. Thus the mechanism used here has no direct analogue in characteristic zero. We conclude with some open questions. %{\color{red} JM: Should we contrast with the situation in characteristic zero? It's interesting how crucial is the fact that the characteristic here is positive.}

\section{Artin--Schreier configurations and interpolation}\label{sec:AS}

Throughout this section, let $K$ be an algebraically closed field of characteristic $p>0$, and let $N=p^e$, with $e\geq1$. The basic identity underlying our construction is
\[
\prod_{a\in\FF_N}(t-a)=t^N-t.
\]
Thus, if $\beta,\lambda\in K$, with $\lambda\neq0$, the affine copy $B=\beta+\lambda\FF_N$ is the zero set of $t^N-\mu t+\nu$, where $\mu=\lambda^{N-1}$ and $\nu=\lambda^{N-1}\beta-\beta^N$. Indeed,
\[
\prod_{a\in\FF_N}\bigl(t-(\beta+\lambda a)\bigr)
=(t-\beta)^N-\lambda^{N-1}(t-\beta)
=t^N-\mu t+\nu.
\]
The absence of intermediate terms is the key feature that makes the interpolation method effective.

\begin{definition}
\label{def:AS-configuration}
Let $L_1,\ldots,L_q\subset\PP^n$ be distinct lines through a common point $O$. An \emph{$\FF_N$-Artin--Schreier configuration} supported on $L_1,\ldots,L_q$ is a finite set $Z=B_1\cup\cdots\cup B_q$, where $B_r\subset L_r\setminus\{O\}$ is, in a suitable affine coordinate on $L_r$, an affine copy $B_r=\beta_r+\lambda_r\FF_N$, with $\lambda_r\neq0$. We call $B_r$ the {\em Artin--Schreier orbit} on $L_r$.
\end{definition}

Thus every orbit consists of $N$ distinct points. Notice also that the definition is independent of the choice of affine coordinate fixing $O$, since an affine transformation sends an affine copy of $\FF_N$ to another affine copy of $\FF_N$.

This observation immediately gives the behavior under projection.

\begin{lemma}
\label{lem:AS-projection}
Let $Z\subset\PP^n$ be an $\FF_N$-Artin--Schreier configuration supported on distinct concurrent lines $L_1,\ldots,L_q$, and let $\pi_P\colon\PP^n\dashrightarrow H\simeq\PP^{n-1}$ be projection from a point $P$ which does not belong to any $L_r$. Assume that the lines $\pi_P(L_r)$ are distinct. Then $\pi_P(Z)$ is an $\FF_N$-Artin--Schreier configuration supported on the concurrent lines $\pi_P(L_r)$.
\end{lemma}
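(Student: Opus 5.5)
The plan is to reduce the statement to one fact: the restriction of $\pi_P$ to a line $L_r$ not containing $P$ is a projective isomorphism $L_r\to\pi_P(L_r)$. Such an isomorphism sends the common point $O$ to $\pi_P(O)$, and we will check that it carries the affine structure of $L_r\setminus\{O\}$ used to define $B_r$ to an affine structure on $\pi_P(L_r)\setminus\{\pi_P(O)\}$. Once this is done, the remark after Definition~\ref{def:AS-configuration} finishes the argument. That remark says the notion of an affine copy of $\FF_N$ does not depend on the choice of affine coordinate, as long as the point at infinity is fixed.

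First we verify the hypotheses on the image. The point $P$ does not lie on any $L_r$; in particular $P\neq O$, so $\pi_P(O)$ is defined. Each line $L_r$ spans, together with $P$, a plane. That plane meets $H$ in the line $\pi_P(L_r)$, and this line passes through $\pi_P(O)$. So the lines $\pi_P(L_r)$ are concurrent at $\pi_P(O)$, and by hypothesis they are distinct.

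Next we check that $\pi_P|_{L_r}$ is a projective isomorphism. Choose homogeneous coordinates and parametrize $L_r$ as $[sO+tV_r]$, where $V_r$ is a vector representing another point of $L_r$. The projection is induced by a linear map $\PP^n\to H$ whose kernel is the line spanned by $P$. Since $P\notin L_r$, the images of $O$ and $V_r$ are linearly independent. Hence $[s:t]\mapsto[s\pi(O)+t\pi(V_r)]$ is a linear isomorphism of $\PP^1$'s sending $O$ to $\pi_P(O)$. In the affine coordinate on $L_r\setminus\{O\}$, and with the same parametrization downstairs, the point $\pi_P(O)$ is the point at infinity. The map therefore becomes the identity in the coordinate $t/s$, or an affine map $t\mapsto ct+d$ if another coordinate is used.

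Finally we transport the orbit. We have $B_r=\beta_r+\lambda_r\FF_N$ in the affine coordinate on $L_r$. Its image is $(c\beta_r+d)+c\lambda_r\FF_N$, with $c\lambda_r\neq 0$, which is again an affine copy of $\FF_N$. It lies in $\pi_P(L_r)\setminus\{\pi_P(O)\}$ because $B_r$ avoids $O$ and the map is bijective. The images of different orbits lie on distinct lines, and these lines meet only at $\pi_P(O)$, which no orbit contains. So $\pi_P(Z)=\bigcup_r\pi_P(B_r)$ satisfies Definition~\ref{def:AS-configuration}, and we even get $|\pi_P(Z)|=|Z|$.

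The only point needing care is the choice of affine coordinate. The coordinate in which $B_r$ is affine must be one in which $O$ sits at infinity; this is how we read ``suitable affine coordinate'' in the definition. Then the projective isomorphism fixing the infinity points is affine in these coordinates, and everything else is routine.
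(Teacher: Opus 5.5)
Your proposal is correct and follows the same route as the paper: the restriction of $\pi_P$ to $L_r$ is a projectivity sending $O$ to $\pi_P(O)$, so it is affine on the complements and carries affine copies of $\FF_N$ to affine copies. Your extra checks (concurrency of the image lines, linear independence of $\pi(O)$ and $\pi(V_r)$, disjointness of the projected orbits) fill in details the paper leaves implicit. One small slip: with the parametrization $[sO+tV_r]$, the point $O$ is $t=0$, so the affine coordinate placing $O$ at infinity is $s/t$, not $t/s$; this does not affect the argument, since the map is the identity in $[s:t]$.
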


\begin{proof}
The restriction of $\pi_P$ to each $L_r$ is a projectivity onto $\pi_P(L_r)$, sending the common point $O$ to the common point $\pi_P(O)$. After removing these points, the induced map between the corresponding affine lines is affine. It therefore sends an affine copy of $\FF_N$ to another affine copy of $\FF_N$.
\end{proof}

We now describe the interpolation problem associated with such a configuration. Let $H\simeq\PP^{n-1}$ be a hyperplane not containing $O$, and write $R_r=L_r\cap H$. The set $\Gamma=\{R_1,\ldots,R_q\}\subset H$ will be called the \emph{direction set} of the supporting lines.

Choose homogeneous coordinates $[x_0:x_1:\cdots:x_n]$ such that $O=[1:0:\cdots:0]$ and $H=V(x_0)$. After choosing a representative $v_r$ of $R_r$, every point of $L_r\setminus\{O\}$ can be written uniquely as $[t:v_r]$. In this coordinate the orbit $B_r$ is cut out by $t^N-\mu_rt+\nu_r$, with $\mu_r\neq0$.

Let $S=K[x_1,\ldots,x_n]$. For $d\geq0$, denote by
%\[ \ev_d:S_d\longrightarrow K^q,\qquad F\longmapsto\bigl(F(v_1),\ldots,F(v_q)\bigr) \]
\[
\ev_d\colon S_d\longrightarrow K^q,\qquad
F\longmapsto\bigl(F(v_1),\ldots,F(v_q)\bigr)
\]
the evaluation map. The choice of the representatives $v_r$ amounts to choosing trivializations at the points $R_r$; changing them rescales the corresponding coefficients coherently.

The following criterion is the basic interpolation result.

\begin{proposition}
\label{prop:AS-interpolation}
With the notation above, there exists a hypersurface $V\subset\PP^n$ of degree $N$ such that $V\cap L_r=B_r$ scheme-theoretically for every $r$ if and only if
\begin{equation}
\label{eq:AS-interpolation}
(\mu_1,\ldots,\mu_q)\in\operatorname{Im}(\ev_{N-1})
\quad\text{and}\quad
(\nu_1,\ldots,\nu_q)\in\operatorname{Im}(\ev_N).
\end{equation}
In this case, $V$ contains none of the lines $L_r$.
\end{proposition}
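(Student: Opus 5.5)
Expand a degree-$N$ form in $x_0$ and restrict it to each line. Write a form of degree $N$ on $\PP^n$ as
\[
F=\sum_{k=0}^{N} x_0^{k}G_{N-k}(x_1,\ldots,x_n),\qquad G_j\in S_j .
\]
The point $[t:v_r]$ of $L_r$ corresponds to the vector $(t,v_r)$, and $F$ is homogeneous. So the restriction of $F$ to $L_r$, in the affine coordinate $t$ (the point at infinity of this coordinate is $R_r$, and $O$ sits at $t=\infty$ in the sense that $O=[1:0]$), is the polynomial
\[
f_r(t)=\sum_{k=0}^{N} G_{N-k}(v_r)\,t^{k},
\]
whose leading coefficient is $G_0$, a constant. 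More invariantly, parametrize $L_r$ by $[s:u]\mapsto[s:u v_r]$; then $F|_{L_r}=\sum_k G_{N-k}(v_r)s^ku^{N-k}$ is a binary form of degree $N$. Thus $V\cap L_r$ is a scheme of length $N$ (or $L_r\subset V$), and it equals $B_r=V(t^N-\mu_rt+\nu_r)$, viewed in the chart $u=1$ (where $O=[1:0]$ lies at $u=0$), exactly when this binary form is a nonzero scalar multiple $c_r$ of $s^N-\mu_r s u^{N-1}+\nu_r u^N$.

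For the \emph{if} direction, choose $G_0=1$, $G_j=0$ for $2\le j\le N-2$, $G_{N-1}\in S_{N-1}$ with $\ev_{N-1}(G_{N-1})=(-\mu_r)_r$, and $G_N\in S_N$ with $\ev_N(G_N)=(\nu_r)_r$. Both choices are possible by \eqref{eq:AS-interpolation}, since the image of $\ev$ is a linear subspace and so is closed under negation. Then
\[
F=x_0^N+x_0G_{N-1}+G_N
\]
restricts on each $L_r$ exactly to $t^N-\mu_rt+\nu_r$. Since $F|_{L_r}$ is a nonzero polynomial of degree $N$ with simple roots $B_r$, and $F(O)=1\ne0$, we get $V\cap L_r=B_r$ scheme-theoretically and $L_r\not\subset V$.

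For the \emph{only if} direction, suppose $V=V(F)$ with $V\cap L_r=B_r$ scheme-theoretically. Then $L_r\not\subset V$, and $F|_{L_r}=c_r(s^N-\mu_rsu^{N-1}+\nu_ru^N)$ with $c_r\ne0$. Comparing the coefficients of $s^N$ gives $G_0=c_r$ for every $r$. Hence all $c_r$ equal the same constant $c=G_0\neq0$; this also shows $O\notin V$, consistent with $B_r\subset L_r\setminus\{O\}$. This uniformity of $c_r$ is the key point, and the main place where care is needed. Without it, the $c_r$ could differ and we would only get that $(c_r\mu_r)_r$ lies in the image, not $(\mu_r)_r$. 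Here it is forced by the fact that the top coefficient $G_0$ is a constant, independent of the direction $v_r$.

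Comparing the coefficients of $su^{N-1}$ and $u^N$ then gives $\ev_{N-1}(G_{N-1})=-c(\mu_r)_r$ and $\ev_N(G_N)=c(\nu_r)_r$. Dividing by $-c$ and by $c$ respectively, which is allowed since the images are linear subspaces, yields \eqref{eq:AS-interpolation}.

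Finally, the choice of representatives $v_r$ does not matter. Replacing $v_r$ by $\alpha v_r$ rescales $t$, and hence rescales $\mu_r$ by $\alpha^{N-1}$ and $\nu_r$ by $\alpha^{N}$. This matches exactly how $\ev_{N-1}$ and $\ev_N$ transform at the $r$-th coordinate, so the criterion is well defined, as the remark before the statement indicates.
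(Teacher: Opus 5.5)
Your proof is correct and follows the paper's argument: you expand $F$ in powers of $x_0$, restrict to each $L_r$, and compare with $t^N-\mu_rt+\nu_r$. For the converse you take $F=x_0^N-x_0M+Q$, as the paper does, and you also make explicit that the proportionality constant is the same constant $G_0$ on every line, which the paper uses implicitly when it rescales to $c_0=1$. Two harmless slips: the coefficients should vanish for $1\le j\le N-2$, as your displayed $F$ shows, and $R_r$ sits at $t=0$ on $L_r$, not at infinity (it is $O$ that lies at $t=\infty$).
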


\begin{proof}
Let $F$ be a homogeneous form of degree $N$. Its restriction to $L_r$, in the coordinate $[t:v_r]$, has the form
\[
F(t,v_r)=
c_0t^N+c_1(v_r)t^{N-1}+\cdots+c_{N-1}(v_r)t+c_N(v_r),
\]
where $c_i$ is a homogeneous form of degree $i$ in $x_1,\ldots,x_n$.

Suppose first that $F$ cuts out $B_r$ on every $L_r$. Since $B_r$ consists of the $N$ roots of $t^N-\mu_rt+\nu_r$, the two degree-$N$ polynomials are proportional. After rescaling $F$, we may assume $c_0=1$. It follows that $c_i(v_r)=0$ for $1\leq i\leq N-2$, $c_{N-1}(v_r)=-\mu_r$, and $c_N(v_r)=\nu_r$. Hence the two vectors in \eqref{eq:AS-interpolation} belong to the indicated evaluation images. %{\color{red} Are you saying that the form of degree $N-1$ giving $(\mu_1,\dots,\mu_q)$ is $\frac{\partial F}{\partial t}$? Can we say that (if that's what's intended)?} {\color{blue} (No, the form of degree $N-1$ giving $(\mu_1,\ldots,\mu_q)$ is simply $-c_{N-1}$, since $c_{N-1}(v_r)=-\mu_r$.)}

Conversely, suppose that \eqref{eq:AS-interpolation} holds. Choose $M\in S_{N-1}$ and $Q\in S_N$ such that $M(v_r)=\mu_r$ and $Q(v_r)=\nu_r$ for every $r$. Then
\begin{equation}
\label{eq:AS-hypersurface}
F=x_0^N-x_0M+Q
\end{equation}
is homogeneous of degree $N$, and its restriction to $L_r$ is $t^N-\mu_rt+\nu_r$. Thus $V(F)\cap L_r=B_r$ scheme-theoretically. Since this restriction is not identically zero, $V(F)$ contains none of the supporting lines.
\end{proof}

\begin{remark}
\label{rem:coherent-rescaling}
The coefficient vectors in Proposition~\ref{prop:AS-interpolation} depend on the choice of representatives of the directions, but the criterion does not. Replacing $v_r$ by $c_rv_r$ changes the affine coordinate on $L_r$, and hence changes $\mu_r$ and $\nu_r$ by precisely the homogeneous rescalings occurring in evaluation in degrees $N-1$ and $N$, respectively.
\end{remark}

For later use, we record the standard maximal-rank consequence.

\begin{definition}
\label{def:d-general-position}
A finite set $\Gamma\subset\PP^s$ of $q$ points is in
\emph{$d$-general position} if
\[
\dim_K(S/I_\Gamma)_d=\min\{q,\dim_K S_d\}.
\]
Equivalently, the evaluation map in degree $d$ has maximal rank.
\end{definition}

\begin{corollary}
\label{cor:AS-general-position}
Let $Z$ be an $\FF_N$-Artin--Schreier configuration supported on $q$ concurrent lines with direction set $\Gamma\subset\PP^{n-1}$. Suppose that $\Gamma$ is in $(N-1)$-general position and that $q\leq\dim_K S_{N-1}$.
Then there exists a hypersurface of degree $N$ containing $Z$ and none of its supporting lines.
\end{corollary}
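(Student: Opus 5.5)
The plan is to apply Proposition~\ref{prop:AS-interpolation} directly. Once we check that both evaluation maps $\ev_{N-1}$ and $\ev_N$ are surjective onto $K^q$, the interpolation conditions \eqref{eq:AS-interpolation} hold automatically, whatever the vectors $(\mu_r)$ and $(\nu_r)$ are.

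We begin with degree $N-1$. Since $\Gamma$ is in $(N-1)$-general position, we have $\dim_K(S/I_\Gamma)_{N-1}=\min\{q,\dim_K S_{N-1}\}=q$, using the hypothesis $q\le\dim_K S_{N-1}$. The quantity $\dim_K(S/I_\Gamma)_{N-1}$ is the rank of $\ev_{N-1}$, because its kernel is $(I_\Gamma)_{N-1}$. So $\ev_{N-1}$ has rank $q$, which means it is surjective.

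Next we pass to degree $N$, where no hypothesis is given directly. We use the fact that the points $R_r$ lie in $H=V(x_0)$, so none of them is the zero vector. For each $r$ we can therefore pick a linear form $\ell_r\in S_1$ with $\ell_r(v_r)\neq0$; choosing it generically, we may use a single linear form $\ell$ with $\ell(v_r)\neq0$ for all $r$, which exists because $K$ is infinite. Given any target vector $(a_r)\in K^q$, we use surjectivity of $\ev_{N-1}$ to find $G\in S_{N-1}$ with $G(v_r)=a_r/\ell(v_r)$. Then $\ell G\in S_N$ evaluates to $(a_r)$, so $\ev_N$ is surjective too. This is the standard fact that the Hilbert function of a finite set is nondecreasing and stabilizes at $q$.

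With both evaluation maps onto $K^q$, both conditions in \eqref{eq:AS-interpolation} hold. Proposition~\ref{prop:AS-interpolation} then produces a hypersurface $V=V(F)$ of degree $N$, with $F=x_0^N-x_0M+Q$, such that $V\cap L_r=B_r$ for every $r$. In particular $V$ contains $Z=\bigcup B_r$ and none of the lines $L_r$. There is no real obstacle; the only mild point is the degree-$N$ surjectivity, which is handled by multiplying by a linear form nonvanishing on $\Gamma$.
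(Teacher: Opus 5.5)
Your proposal is correct and follows essentially the same route as the paper's proof: you get surjectivity of $\ev_{N-1}$ from $(N-1)$-general position together with $q\leq\dim_K S_{N-1}$, then get surjectivity of $\ev_N$ by multiplying by a linear form $\ell$ that is nonvanishing on $\Gamma$ after rescaling the targets by $1/\ell(v_r)$, and then apply Proposition~\ref{prop:AS-interpolation}. Your version just spells out the rank computation (the kernel of $\ev_{N-1}$ is $(I_\Gamma)_{N-1}$) and why such an $\ell$ exists, which the paper leaves implicit.
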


\begin{proof}
The assumptions imply that $\ev_{N-1}$ is surjective. The evaluation map in degree $N$ is then surjective as well: choose a linear form $\ell$ which does not vanish at any point of~$\Gamma$, and multiply interpolating forms of degree $N-1$ by $\ell$, after rescaling the prescribed values. Proposition~\ref{prop:AS-interpolation} now applies.
\end{proof}

The interpolation criterion also gives a simple numerical sufficient condition for an Artin--Schreier configuration to be geproci.

\begin{corollary}
\label{cor:higher-dimensional-sufficient}
Let $Z\subset\PP^n$, $n\geq3$, be an
$\FF_N$-Artin--Schreier configuration supported on $q$ concurrent
lines, where $N=p^e$. Suppose that, for a general projection
$\pi_P\colon\PP^n\dashrightarrow\PP^{n-1}$,

\begin{enumerate}
\item the union of the projected supporting lines is a reduced
complete intersection of type $(d_1,\ldots,d_{n-2})$;
\item the direction set
$\Gamma_P\subset\PP^{n-2}$ is in $(N-1)$-general position;
\item
$q\leq
h^0\bigl(\PP^{n-2},
\mathcal O_{\PP^{n-2}}(N-1)\bigr)
=
\binom{N+n-3}{n-2}.$
\end{enumerate}

Then $Z$ is geproci. More precisely, its general projection is a
complete intersection of type
$(d_1,\ldots,d_{n-2},N).$
\end{corollary}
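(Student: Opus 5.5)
Fix a general point $P$ and write $Z_P=\pi_P(Z)\subset H\simeq\PP^{n-1}$. For general $P$, the point $P$ lies on no $L_r$ and the projected lines $\pi_P(L_r)$ are distinct. So Lemma~\ref{lem:AS-projection} shows that $Z_P$ is an $\FF_N$-Artin--Schreier configuration on the $q$ concurrent lines $L'_r=\pi_P(L_r)$ through $O'=\pi_P(O)$. Its direction set is $\Gamma_P\subset\PP^{n-2}$. Moreover $|Z_P|=qN$, since distinct lines through $O'$ meet only at $O'$ and the orbits avoid $O'$.

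The plan is to exhibit $Z_P$ as the intersection of $C=\bigcup_r L'_r$ with a degree-$N$ hypersurface, and then to upgrade this to a complete intersection. By (1), $C$ is a reduced complete intersection $V(G_1,\ldots,G_{n-2})$ with $\deg G_i=d_i$. By (2) and (3), Corollary~\ref{cor:AS-general-position} applied in $\PP^{n-1}$ gives a form $F$ of degree $N$ containing $Z_P$ and none of the $L'_r$. By Proposition~\ref{prop:AS-interpolation}, $V(F)\cap L'_r=B'_r$ scheme-theoretically for each $r$. In particular $V(F)$ contains no component of $C$. Hence $G_1,\ldots,G_{n-2},F$ is a regular sequence, and $X=V(G_1,\ldots,G_{n-2},F)$ is a zero-dimensional complete intersection of degree $d_1\cdots d_{n-2}N$.

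We then show $X=Z_P$ as schemes. Since $C$ is a complete intersection of type $(d_1,\ldots,d_{n-2})$, its degree is $\prod d_i$. Since $C$ is reduced and consists of $q$ lines, this forces $q=\prod d_i$. Hence $\deg X=qN=|Z_P|$. Set-theoretically, $X=C\cap V(F)=\bigcup_r B'_r=Z_P$. Here the vertex $O'$ is not in $V(F)$, because $F(O')$ is the leading coefficient $1$ of $x_0^N-x_0M+Q$.

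It remains to check that $X$ is reduced, and this is the main obstacle. At a point $z\in B'_r$ the curve $C$ is smooth, because $z\neq O'$ and only $L'_r$ passes through $z$. The local ring of $X$ at $z$ is therefore that of $L'_r\cap V(F)$ at $z$, which is reduced since the restriction $t^N-\mu_rt+\nu_r$ has $N$ distinct simple roots. Thus $X$ is reduced of length $qN$, so $X=Z_P$. This shows $Z_P$ is a complete intersection of type $(d_1,\ldots,d_{n-2},N)$. As $P$ was general, $Z$ is geproci of this type.
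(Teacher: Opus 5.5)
Your proof is correct and follows the same route as the paper's: project via Lemma~\ref{lem:AS-projection}, get a degree-$N$ form avoiding the projected lines from Corollary~\ref{cor:AS-general-position}, and intersect it with the complete-intersection curve. You add details the paper leaves implicit (the regular sequence, $q=\prod d_i$, and the vertex not lying on $V(F)$); note that your separate reducedness check is redundant, since a scheme of length $qN$ supported on $qN$ distinct points is automatically reduced.
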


\begin{proof}
By Lemma~\ref{lem:AS-projection}, $\pi_P(Z)$ is an
$\FF_N$-Artin--Schreier configuration on the projected supporting
lines. Conditions (2) and (3), together with
Corollary~\ref{cor:AS-general-position}, give a hypersurface of
degree $N$ containing $\pi_P(Z)$ and none of its supporting lines.

By (1), the union of the projected supporting lines is a complete
intersection of type $(d_1,\ldots,d_{n-2})$. Intersecting it with
the degree-$N$ hypersurface gives precisely the $qN$ reduced points
of $\pi_P(Z)$. Thus $\pi_P(Z)$ is a complete intersection of type
$(d_1,\ldots,d_{n-2},N)$, and hence $Z$ is geproci.
\end{proof}
\begin{example}
\label{ex:P4-twelve-points}
Let $K$ be algebraically closed of characteristic $3$. In $\PP^4$
with coordinates $[x_0:x_1:x_2:x_3:x_4]$, set
$O=[1:0:0:0:0]$ and, for $i=1,\ldots,4$, let
\[
L_i=\{[t:e_i]\mid t\in K\}\cup\{O\},
\qquad
A_i=\{[j:e_i]\mid j\in\FF_3\}.
\]
Then 
$Z=A_1\cup A_2\cup A_3\cup A_4$ 
is a $(2,2,3)$-geproci set of $12$ points in $\PP^4$.

Indeed, the four lines are concurrent and span $\PP^4$. After a
general projection to $\PP^3$, their directions are four points in
$\PP^2$ with no three collinear. Hence they form a complete
intersection of type $(2,2)$ and are in $2$-general position. Since 
$4\leq h^0(\PP^2,\mathcal O_{\PP^2}(2))=6,$ 
Corollary~\ref{cor:higher-dimensional-sufficient} applies.

Explicitly, the twelve points are
\[
\begin{array}{lll}
{}[0:1:0:0:0],&[1:1:0:0:0],&[2:1:0:0:0],\\
{}[0:0:1:0:0],&[1:0:1:0:0],&[2:0:1:0:0],\\
{}[0:0:0:1:0],&[1:0:0:1:0],&[2:0:0:1:0],\\
{}[0:0:0:0:1],&[1:0:0:0:1],&[2:0:0:0:1].
\end{array}
\]
\end{example}

\section{Artin--Schreier geproci sets in \texorpdfstring{$\PP^3$}{P3}}
\label{sec:AS-P3}

We now specialize the results of the previous section to $\PP^3$. In this case the interpolation problem becomes particularly transparent, since the directions of the projected supporting lines form a finite set in $\PP^1$.

Let $N=p^e$, and let $Z\subset\PP^3$ be an $\FF_N$-Artin--Schreier configuration supported on $q$ distinct lines $L_1,\ldots,L_q$ through a point $O$. Throughout this section we assume that the supporting lines span $\PP^3$.

For a general point $P\in\PP^3$, the projection $\pi_P(Z)\subset\PP^2$ is supported on $q$ distinct concurrent lines. Their union is a plane curve $G_P$ of degree $q$ containing $\pi_P(Z)$. We first obtain a sufficient condition for $Z$ to be $(q,N)$-geproci.

\begin{proposition}
\label{prop:P3-sufficient}
Let $Z\subset\PP^3$ be an $\FF_N$-Artin--Schreier configuration supported on $q$ concurrent lines spanning $\PP^3$. If $q\leq N$, then $Z$ is a $(q,N)$-geproci set.
\end{proposition}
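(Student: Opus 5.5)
The plan is to apply the setup of Section~\ref{sec:AS} to a general projection and interpolate on $\PP^1$. Fix a general point $P\in\PP^3$. Since the lines $L_r$ are distinct, concurrent and span $\PP^3$, a general $P$ lies on none of them. It also lies on none of the finitely many planes spanned by pairs $L_r,L_s$, so the projected lines $\pi_P(L_r)$ are $q$ distinct lines in $\PP^2$ through $\pi_P(O)$. By Lemma~\ref{lem:AS-projection}, $\pi_P(Z)$ is then an $\FF_N$-Artin--Schreier configuration on these lines. Their union $G_P$ is a reduced plane curve of degree $q$ containing $\pi_P(Z)$.

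Next I would produce the degree-$N$ curve. Choose a line $H\subset\PP^2$ not through $\pi_P(O)$. The direction set $\Gamma_P=\{R_1,\dots,R_q\}\subset H\simeq\PP^1$ consists of $q$ distinct points, and $S=K[x_1,x_2]$. For distinct points on $\PP^1$, the evaluation map $\ev_d\colon S_d\to K^q$ is surjective as soon as $q\le d+1$. To see this, take for each $r$ the product of linear forms vanishing at the other $q-1$ points, times a suitable power of a linear form nonvanishing at $R_r$; these give a Lagrange basis. With $d=N-1$ and $d=N$, the hypothesis $q\le N$ gives surjectivity of both $\ev_{N-1}$ and $\ev_N$. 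So condition~\eqref{eq:AS-interpolation} holds for any coefficient vectors. Proposition~\ref{prop:AS-interpolation} then yields a curve $V$ of degree $N$ with $V\cap\pi_P(L_r)=\pi_P(B_r)$ scheme-theoretically for every $r$, and $V$ contains no line $\pi_P(L_r)$. Alternatively, one can quote Corollary~\ref{cor:AS-general-position} once one notes that $q$ distinct points of $\PP^1$ are in $(N-1)$-general position when $q\le N=\dim S_{N-1}$.

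The last step is to show that $G_P\cap V=\pi_P(Z)$ as schemes. Since $V$ and $G_P$ share no component, Bézout gives $\deg(G_P\cap V)=qN$. Set-theoretically, $G_P\cap V=\bigcup_r(\pi_P(L_r)\cap V)=\pi_P(Z)$. The point $\pi_P(O)$ is not in $V$, because $B_r\subset L_r\setminus\{O\}$ and $V\cap L_r=B_r$ on each line. The orbits lie on distinct lines away from the common point, so they are pairwise disjoint, and $\pi_P(Z)$ consists of exactly $qN$ distinct points. A zero-dimensional scheme of length $qN$ with $qN$ distinct support points is reduced. Hence $G_P\cap V=\pi_P(Z)$ is a complete intersection of type $(q,N)$, and $Z$ is $(q,N)$-geproci.

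The main point needing care is this final identification. One must check that the points $\pi_P(B_r)$ avoid the singular point $\pi_P(O)$ of $G_P$, so that no extra multiplicity appears and the length count forces reducedness. I expect this to follow directly from $B_r\subset L_r\setminus\{O\}$.
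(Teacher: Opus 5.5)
Your proposal is correct and follows the paper's proof. Both arguments use surjectivity of $\ev_{N-1}$ and $\ev_N$ on the $q\le N$ projected directions in $\PP^1$, then Proposition~\ref{prop:AS-interpolation} to get a degree-$N$ curve containing none of the projected lines, then B\'ezout against the union $G_P$ of the $q$ lines. Your version also spells out details the paper leaves implicit: why the projected lines are distinct, an explicit Lagrange basis for surjectivity, and the length count showing that $G_P\cap V$ is the reduced set $\pi_P(Z)$.
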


\begin{proof}
After projection from a general point, the directions of the $q$ supporting lines form a set $\Gamma$ of $q$ distinct points in $\PP^1$. Since $h^0(\PP^1,\mathcal O_{\PP^1}(N-1))=N$, the assumption $q\leq N$ gives $\HF_\Gamma(N-1)=q$. Hence the evaluation map in degree $N-1$ is surjective, and the same is true in degree $N$.
Proposition~\ref{prop:AS-interpolation} therefore gives a plane curve $F_P$ of degree $N$ containing $\pi_P(Z)$ and none of the supporting lines.

Thus $F_P$ and $G_P$ have no common component. Since they have degrees $N$ and $q$ and contain the $Nq$ reduced points of $\pi_P(Z)$, B\'ezout's theorem gives $\pi_P(Z)=V(F_P,G_P)$. Hence $Z$ is $(q,N)$-geproci.
\end{proof}

We shall prove that the inequality $q\leq N$ is also necessary. The argument uses the following elementary lemma.

\begin{lemma}
\label{lem:Vandermonde-independence}
Let $v_1,\ldots,v_s$ be distinct points of $\PP^2$, not all collinear. For $1\leq i\leq s$, define the homogeneous polynomial
\[
\Phi_i(u)=
\prod_{\substack{j<k\\ j,k\neq i}}
[u,v_j,v_k],
\]
where $[u,v_j,v_k]$ denotes the determinant of homogeneous representatives of $u,v_j,v_k$. Then $\Phi_1,\ldots,\Phi_s$ are linearly independent.
\end{lemma}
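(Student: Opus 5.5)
The plan is to argue by vanishing orders. First I divide out by a power of a line $\ell$ spanned by points of the configuration. Then I restrict to $\ell\simeq\PP^1$ and compare orders of vanishing at the points $v_j\in\ell$. Write $\ell_{ab}$ for the linear form $u\mapsto[u,v_a,v_b]$; it defines the line $\overline{v_av_b}$. Note that $s\geq3$, since the points are not all collinear. Each $\Phi_i$ is a product of the linear forms $\ell_{ab}$ over the pairs $\{a,b\}\subset\{1,\dots,s\}\setminus\{i\}$. Suppose $\sum_i c_i\Phi_i=0$. It suffices to show $c_j=0$ for every $j$.

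\emph{Step 1 (divisibility by a line).} Fix a line $\ell$ through at least two of the $v$'s, and let $m\geq2$ be the number of the $v$'s lying on it. The exponent of the linear form of $\ell$ in $\Phi_i$ is the number of pairs on $\ell$ avoiding $i$. This is $\binom{m-1}{2}$ if $v_i\in\ell$ and $\binom{m}{2}$ otherwise. Hence
\[
\sum_{v_i\in\ell}c_i\Phi_i=-\sum_{v_i\notin\ell}c_i\Phi_i
\]
is divisible by $\ell^{\binom{m-1}{2}+1}$. Set $\Psi_i=\bigl(\Phi_i/\ell^{\binom{m-1}{2}}\bigr)|_\ell$ for $v_i\in\ell$. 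This is the restriction to $\ell$ of the product of the factors $\ell_{ab}\neq\ell$ with $i\notin\{a,b\}$, and it is a nonzero binary form. We obtain $\sum_{v_i\in\ell}c_i\Psi_i=0$ on $\ell\simeq\PP^1$.

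\emph{Step 2 (orders at $v_j$).} Fix $v_j\in\ell$. A factor $\ell_{ab}\neq\ell$ restricted to $\ell$ vanishes exactly at one point. That point is $v_j$ precisely when the line $\overline{v_av_b}$ passes through $v_j$. Let $\mathcal P_j$ be the set of pairs $\{a,b\}$ with $\overline{v_av_b}\ni v_j$ and $\overline{v_av_b}\neq\ell$.
\begin{itemize}
\item For $i\neq j$ with $v_i\in\ell$, no pair in $\mathcal P_j$ contains $i$: such a line through $v_i$ and $v_j$ would be $\ell$ itself. So $\ord_{v_j}\Psi_i=|\mathcal P_j|$.
\item For $\Psi_j$, the pairs containing $j$ are omitted. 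These are the pairs $\{j,k\}$ with $v_k\notin\ell$, so $\ord_{v_j}\Psi_j=|\mathcal P_j|-\#\{k: v_k\notin\ell\}$.
\end{itemize}
Since not all points are collinear, there is at least one $k$ with $v_k\notin\ell$. Hence $\Psi_j$ has strictly smaller order at $v_j$ than every other $\Psi_i$ in the relation. Looking at the lowest-order term at $v_j$ in $\sum c_i\Psi_i=0$ forces $c_j=0$.

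\emph{Step 3.} Every $v_j$ lies on some line spanned by two of the points, for instance $\overline{v_jv_k}$ for any $k\neq j$. So Step 2 gives $c_j=0$ for all $j$.

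The main obstacle is Step 2, namely the bookkeeping of which factors vanish at $v_j$ after restriction. One cannot simply evaluate $\Phi_i$ at the points $v_j$: $\Phi_i(v_i)$ may vanish when there are collinearities, and in positive characteristic Sylvester–Gallai fails, so an ordinary line need not exist. The order comparison is designed to avoid both issues. The key points are that no line other than $\ell$ passes through two points of $\ell$, and that at least one point lies off $\ell$.
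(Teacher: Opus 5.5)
Your proof is correct and follows essentially the same route as the paper's. Both proofs divide the relation by the appropriate power of a line through $v_j$ and another point (using the $\binom{m-1}{2}$ versus $\binom{m}{2}$ orders), then restrict to that line to kill the terms indexed by points off it, and finally separate the surviving terms at $v_j$. The only difference is bookkeeping: you compare vanishing orders at $v_j$, while the paper first cancels the factor common to all surviving terms and then evaluates at $v_j$.
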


\begin{proof}
Suppose that $\sum_{i=1}^s c_i\Phi_i=0$. We show that each $c_i$ is zero.

Fix an index $i$. Since the points $v_1,\ldots,v_s$ are not all collinear, we can choose a line $M$ through $v_i$ and at least one other $v_j$, but not containing all the $v_h$. Let $S=\{j\mid v_j\in M\}$, $r=|S|$, and $a=s-r>0$. We denote also by $M$ a linear equation of this line.

If $h\in S$, the product defining $\Phi_h$ contains the factor $M$ once for every pair of points in $S\setminus\{h\}$, and therefore $\ord_M(\Phi_h)=\binom{r-1}{2}$. If $h\notin S$, all pairs of points in $S$ occur, and hence $\ord_M(\Phi_h)=\binom r2$.

Divide the relation by $M^{\binom{r-1}{2}}$ and restrict it to $M$. Since $\binom r2-\binom{r-1}{2}=r-1>0$, all terms indexed by $h\notin S$ vanish on $M$.

Consider now a term with $h\in S$. After division by $M^{\binom{r-1}{2}}$, the factors corresponding to pairs of points in $S\setminus\{h\}$ contribute only a nonzero scalar. The factors corresponding to pairs of points outside $S$ are common to all these terms. The remaining factors come from pairs $(v_j,v_k)$ with $j\in S\setminus\{h\}$ and $k\notin S$. The line $\langle v_j,v_k\rangle$ meets $M$ at $v_j$, so its restriction to $M$ is a nonzero scalar multiple of a linear form $\ell_j$ vanishing at $v_j$. Since there are exactly $a$ choices for $k\notin S$, after removing a common nonzero polynomial factor on $M$ the restricted relation has the form
\[
\sum_{h\in S}d_h
\prod_{\substack{j\in S\\j\neq h}}\ell_j^{\,a}=0,
\]
where $d_h$ is a nonzero scalar multiple of $c_h$. Evaluate this identity at $v_i$. If $h\neq i$, the corresponding product contains $\ell_i^a$ and therefore vanishes at $v_i$. The term with $h=i$, on the other hand, does not contain $\ell_i$ and is nonzero at $v_i$, since the points of $S$ are distinct. Thus $d_i=0$, and hence $c_i=0$. Since $i$ was arbitrary, all the coefficients $c_i$ vanish.
\end{proof}

\begin{example}
\label{ex:Vandermonde-independence}
We illustrate the mechanism of the proof in a simple case. Suppose that
$v_1,v_2,v_3$ lie on a line $M$, while $v_4\notin M$. Then
\[
\begin{aligned}
\Phi_1&=[u,v_2,v_3][u,v_2,v_4][u,v_3,v_4],\qquad 
\Phi_2=[u,v_1,v_3][u,v_1,v_4][u,v_3,v_4],\\
\Phi_3&=[u,v_1,v_2][u,v_1,v_4][u,v_2,v_4],\qquad 
\Phi_4=[u,v_1,v_2][u,v_1,v_3][u,v_2,v_3].
\end{aligned}
\]
The factors determined by two of $v_1,v_2,v_3$ are scalar multiples
of the equation of $M$. Hence
\[
\ord_M(\Phi_1)=\ord_M(\Phi_2)=\ord_M(\Phi_3)=1,
\qquad
\ord_M(\Phi_4)=3.
\]
Thus, starting from a relation
\[
c_1\Phi_1+c_2\Phi_2+c_3\Phi_3+c_4\Phi_4=0,
\]
dividing by $M$ and restricting to $M$ eliminates the last term.
Up to nonzero scalar factors, the resulting relation on $M$ is
\[
d_1\ell_2\ell_3+
d_2\ell_1\ell_3+
d_3\ell_1\ell_2=0,
\]
where $\ell_j$ is a linear form on $M$ vanishing at $v_j$.
Evaluating at $v_1$ kills the last two terms, while
$\ell_2(v_1)\ell_3(v_1)\neq0$, and therefore $d_1=0$, hence
$c_1=0$. Evaluating similarly at $v_2$ and $v_3$ gives
$c_2=c_3=0$, and the original relation then gives $c_4=0$.

This is precisely the mechanism used in the general proof: the
order of vanishing along $M$ first separates the points on $M$
from those outside it, and evaluation at the individual points
of $M$ then separates the remaining terms.
\end{example}
The preceding argument may be viewed as a projective analogue of the usual Vandermonde separation argument: after restriction to $M$, evaluation at the points $v_i$ isolates the individual terms.

We can now characterize all $\FF_N$-Artin--Schreier configurations of this type in $\PP^3$.

\begin{theorem}
\label{thm:P3-characterization}
Let $N=p^e$, and let $Z\subset\PP^3$ be an
$\FF_N$-Artin--Schreier configuration supported on $q$
concurrent lines spanning $\PP^3$. Then $Z$ is
$(q,N)$-geproci if and only if $q\leq N$.
\end{theorem}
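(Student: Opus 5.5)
The sufficiency of $q\leq N$ is Proposition~\ref{prop:P3-sufficient}. For necessity I assume $q>N$ and that $Z$ is $(q,N)$-geproci, and derive a contradiction in two stages. First, the geproci property forces the interpolation condition \eqref{eq:AS-interpolation} for $\pi_P(Z)$ at every general $P$. Second, that condition, viewed as a polynomial identity in the center $P$, contradicts Lemma~\ref{lem:Vandermonde-independence}.

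\emph{Step 1 (reduction to interpolation).} Fix a general $P$ and let $F$ be a degree-$N$ generator of the complete intersection $\pi_P(Z)=V(F,G)$. Suppose $F$ contained one of the projected lines, say $F=\ell F'$ with $\ell$ the equation of that line. Then $F'$ has degree $N-1$. Each other line carries $N$ points of $\pi_P(Z)$, none of which is the common point, and all of them must lie on $V(F')$, so $F'$ contains every other line. Hence $F$ contains all $q>N$ lines, which is impossible in degree $N$. So $F$ contains no supporting line. It therefore meets each line in exactly the $N$ reduced points of the projected orbit, scheme-theoretically. By Lemma~\ref{lem:AS-projection} and Proposition~\ref{prop:AS-interpolation}, the vector $\mu^P=(\mu^P_1,\ldots,\mu^P_q)$ lies in $\operatorname{Im}(\ev_{N-1})$ for the direction set $\Gamma_P\subset\PP^1$, and this holds for general $P$.

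\emph{Step 2 (the condition as an identity in $u$).} Keep $H=V(x_0)$ and write the directions of the $L_r$ as $v_r\in\PP^2$. Write $P=[p_0:u]$ with $u\in\PP^2$. The plane $\langle O,P,L_r\rangle$ determines the image direction $w_r\in\PP^1$, which is the projection of $v_r$ from $u$. With compatible representatives we have $[w_j,w_k]=[u,v_j,v_k]$. Since $q>N$ and the $v_r$ are not all collinear (the lines span $\PP^3$), I choose $N+1$ indices, say $1,\ldots,N+1$, whose points $v_i$ are not all collinear. For $N+1$ distinct points of $\PP^1$, a vector lies in the image of $\ev_{N-1}$ if and only if the $(N+1)\times(N+1)$ determinant formed by the degree-$(N-1)$ monomials evaluated at the points, bordered by that vector, vanishes. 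Expanding along the bordering column gives a sum of $\pm\mu^P_i$ times Vandermonde determinants of the remaining $N$ points. Each such Vandermonde is $\prod_{j<k;\,j,k\neq i}[w_j,w_k]=\Phi_i(u)$. The condition therefore reads
\[
\sum_{i=1}^{N+1}\pm\,\mu^P_i\,\Phi_i(u)=0 .
\]

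\emph{Step 3 (tracking $\mu^P_i$; the main obstacle).} I must show that, after a coherent choice of representatives in the sense of Remark~\ref{rem:coherent-rescaling}, $\mu^P_i$ equals $\mu_i$ times a factor that is the same for all $i$ or can be absorbed into the signs. The first thing I would try is to write the projection explicitly in coordinates. The $p_0$-component of $P$ should only translate the affine coordinate $t$ on each line. A translation affects only $\nu$, because $(t-\beta)^N$ is additive in characteristic $p$. The rescaling comes from identifying the representative $v_r$ with the chosen representative of $w_r$, and it multiplies $\mu_r$ by a $(N-1)$-st power that is compatible with the homogeneity used in $\Phi_i$. Once this is verified, the identity above holds for $u$ in a dense open set with coefficients $\pm c\,\mu_i$, which are nonzero constants independent of $u$. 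It is therefore a nontrivial linear relation among $\Phi_1,\ldots,\Phi_{N+1}$, contradicting Lemma~\ref{lem:Vandermonde-independence}. Hence $q\leq N$.
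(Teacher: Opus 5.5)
Your outline follows the paper's proof: sufficiency from Proposition~\ref{prop:P3-sufficient}, then reduction to the interpolation condition of Proposition~\ref{prop:AS-interpolation}, the bordered Vandermonde expansion, and Lemma~\ref{lem:Vandermonde-independence}. Your Step~1 is a slightly different and cleaner argument than the paper's. The paper uses the degree-$q$ generator and a length count on $L$, and then separately excludes the vertex. Your factorization $F=\ell F'$ gives both facts at once, and uses only that $F$ is a nonzero form of degree $N$ vanishing on $\pi_P(Z)$.

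The gap is Step~3. You state it as something to be verified rather than verifying it, and it is exactly the step that makes Lemma~\ref{lem:Vandermonde-independence} applicable. The lemma gives linear independence over $K$, so you need the coefficients of the $\Phi_i(u)$ to be constants, up to a factor common to all $i$. A pointwise identity whose coefficients $\mu^P_i$ vary with $u$ in an uncontrolled way is not a linear relation among the $\Phi_i$. Your heuristic also misplaces the roles of the translation and the rescaling.

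Compute directly. For $P=[p_0:u]$ with $p_0\neq0$, the point of $\langle P,[t:v_i]\rangle$ on $V(x_0)$ is $[p_0v_i-tu]$, and $\pi_P(O)=[u]$. Fix $K^3=Ku\oplus W$ and write $v_i=\alpha_iu+w_i$ with $w_i\in W$. Then $\pi_P([t:v_i])=[w_i+(\alpha_i-t/p_0)u]$, so with representative $w_i$ the new affine coordinate is $s=\alpha_i-t/p_0$. The $i$-dependent translation comes from $\alpha_i$, the $u$-component of $v_i$. The coordinate $p_0$ contributes only the rescaling by $-1/p_0$, which is the same on every line. Substituting $t=p_0(\alpha_i-s)$ and using $(a-b)^N=a^N-b^N$ turns $t^N-\mu_it+\nu_i$ into $-p_0^N(s^N-p_0^{1-N}\mu_is+\nu_i')$, so $\mu^P_i=p_0^{1-N}\mu_i$.

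With these same representatives, $[u,v_j,v_k]=[u,w_j,w_k]=c(u)\det(w_j,w_k)$ in coordinates on $W$, where $c(u)\neq0$ is independent of $j,k$. So each Vandermonde minor is $\Phi_i(u)$ up to one common factor, and no per-line rescaling is needed. Only then do you obtain \eqref{eq:P3-Vandermonde-relation} with constant coefficients $(-1)^i\mu_i$ and the contradiction with the lemma. This is the computation the paper carries out, with $p_0=1$.
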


\begin{proof}
If $q\leq N$, the conclusion follows from  Proposition~\ref{prop:P3-sufficient}. For the converse, assume that $Z$ is $(q,N)$-geproci and suppose that $q>N$.

For a general center of projection $P$, let $F_P$ be a general element of $[I(\pi_P(Z))]_N$, and let $G_P$ be the union of the $q$ projected supporting lines. The curve $F_P$ contains none of these lines. Indeed, if a supporting line $L$ were a component of $F_P$, then the other member of the complete intersection could not contain $L$, and its intersection with $L$ would have length $q>N$, whereas $\pi_P(Z)\cap L$ consists of only $N$ reduced points.

Thus, on each supporting line $L$, the curve $F_P$ contains the $N$ points of $\pi_P(Z)\cap L$ and does not contain $L$. By B\'ezout, these are precisely the points of $F_P\cap L$. Therefore $\pi_P(Z)=V(G_P,F_P).$

The curve $F_P$ cannot pass through the common point of the supporting lines. Otherwise, on each supporting line it would contain that point together with the $N$ points of $\pi_P(Z)$; B\'ezout's theorem would force every supporting line to be a component of $F_P$, which is impossible because $q>N$.
Hence $F_P$ satisfies the hypotheses of Proposition~\ref{prop:AS-interpolation}.

Since the supporting lines span $\PP^3$, their directions are not all collinear in $\PP^2$. Since $q>N$, we may therefore choose $N+1$ supporting lines whose directions $v_1,\ldots,v_{N+1}\in\PP^2$ are not all collinear. Write the Artin--Schreier equation on the $i$-th line as $t^N-\mu_i t+\nu_i=0$, with $\mu_i\neq0$.

Choose coordinates with $O=[1:0:0:0]$, write the $i$-th supporting line as $[t:v_i]$, and take a general center of projection $P=[1:u]$. Projection onto the plane $x_0=0$ sends $[t:v_i]$ to $[v_i-tu]$. Choose a complement $K^3=Ku\oplus W$ and write $v_i=\alpha_i u+w_i$, with $w_i\in W$. Then the projected line is parametrized by $[w_i+(\alpha_i-t)u]$. Setting $s=\alpha_i-t$ and using $N=p^e$, the equation $t^N-\mu_i t+\nu_i=0$ becomes, after multiplication by a nonzero scalar, $s^N-\mu_i s+\nu_i'=0$ for a suitable $\nu_i'\in K$. Thus the coefficient $\mu_i$ is unchanged by projection, while the projected direction is represented by $w_i\in W$.

By Proposition~\ref{prop:AS-interpolation}, for every general $u$ the vector 
$(\mu_1,\ldots,\mu_{N+1})$ 
belongs to the image of the evaluation map
\[
H^0\bigl(\PP^1,\mathcal O_{\PP^1}(N-1)\bigr)
\longrightarrow K^{N+1}
\]
at the $N+1$ projected directions.

Since 
$h^0\bigl(\PP^1,\mathcal O_{\PP^1}(N-1)\bigr)=N,$
this condition is equivalent to the vanishing of the augmented Vandermonde determinant. Expanding along the column $(\mu_i)$ gives, up to an overall nonzero scalar,
\begin{equation}
\label{eq:P3-Vandermonde-relation}
\sum_{i=1}^{N+1}(-1)^i\mu_i
\prod_{\substack{j<k\\j,k\neq i}}
[u,v_j,v_k]=0.
\end{equation}
Indeed, the homogeneous Vandermonde determinant obtained by omitting the $i$-th projected direction is the product of the pairwise determinants, and these are, up to a common nonzero scalar, the determinants $[u,v_j,v_k]$.

Equation~\eqref{eq:P3-Vandermonde-relation} holds for every general center $u$, hence it is a polynomial identity in $u$. By Lemma~\ref{lem:Vandermonde-independence}, the polynomials 
$\Phi_i(u)=
\prod_{\substack{j<k\\j,k\neq i}}
[u,v_j,v_k]$
are linearly independent. Therefore all the coefficients in \eqref{eq:P3-Vandermonde-relation} vanish, so
$(-1)^i\mu_i=0$
 for every $i$. 
Thus $\mu_i=0$ for every $i$, contradicting $\mu_i\neq0$.
Therefore $q\leq N$.
\end{proof}

\begin{remark}
The Vandermonde relation appearing in the proof is especially transparent when $N=2$, when we consider the $N+1=3$ supporting lines. In this case, for three projected directions  $w_i=[a_i:b_i]\in\PP^1,$ the condition 
$(\mu_1,\mu_2,\mu_3)\in\operatorname{Im}(\ev_1)$
is equivalent to
\[
\det
\begin{pmatrix}
a_1 & b_1 & \mu_1\\
a_2 & b_2 & \mu_2\\
a_3 & b_3 & \mu_3
\end{pmatrix}=0.
\]
Expanding along the last column gives
\[
\mu_1(a_2b_3-a_3b_2)
-\mu_2(a_1b_3-a_3b_1)
+\mu_3(a_1b_2-a_2b_1)=0.
\]
Up to a common nonzero scalar, the pairwise determinants are $[u,v_j,v_k]$, and hence this becomes
\[
\mu_1[u,v_2,v_3]
-\mu_2[u,v_1,v_3]
+\mu_3[u,v_1,v_2]=0.
\]
Thus the general relation
\eqref{eq:P3-Vandermonde-relation} is exactly the higher-degree homogeneous Vandermonde analogue of this $3\times3$ determinant.
\end{remark}

\begin{example}
\label{ex:P3-family}
Let $N=p^e$. For every $3\leq q\leq N$, choose $q$ concurrent distinct lines spanning $\PP^3$ and an $\FF_N$-Artin--Schreier orbit on each line. Theorem~\ref{thm:P3-characterization} gives a $(q,N)$-geproci set of $qN$ points. In particular, this construction works in characteristic $2$: taking $N=4$ gives  $(3,4)$- and $(4,4)$-geproci sets of $12$ and $16$ points, respectively.
\end{example}

\section{Lifting and higher-dimensional constructions}
\label{sec:lifting}

We now use Artin--Schreier interpolation to construct geproci sets in
higher-dimensional projective spaces. The basic operation is a cone
construction.

Let $X\subset\PP^{n-1}$ be a finite reduced set, embedded in a
hyperplane of $\PP^n$, and choose a point
$O\in\PP^n\setminus\PP^{n-1}$. For each $Q\in X$, let
$L_Q=\langle O,Q\rangle$. The union of the $L_Q$ is the cone
$C_O(X)$ over $X$ with vertex $O$.

Fix $N=p^e$. On each generator $L_Q$, choose an
$\FF_N$-Artin--Schreier orbit of $N$ points. The following
observation relates the geometry of the cone to the interpolation
criterion of Section~\ref{sec:AS}.

\begin{proposition}
\label{prop:AS-lifting}
Let $X\subset\PP^{n-1}$ be a reduced complete intersection of type
$(d_1,\ldots,d_{n-1})$, and let $Z\subset\PP^n$ be obtained by
placing an $\FF_N$-Artin--Schreier orbit on each generator of
$C_O(X)$. Assume that the evaluation map
\[
H^0(\PP^{n-1},\mathcal O_{\PP^{n-1}}(N-1))
\longrightarrow K^{|X|}
\]
is surjective. Then $Z$ is a complete intersection of type
$(d_1,\ldots,d_{n-1},N)$.
\end{proposition}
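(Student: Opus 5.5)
Set coordinates so that $O=[1:0:\cdots:0]$ and the hyperplane containing $X$ is $H=V(x_0)$. Then $X$ is exactly the direction set $\Gamma$ of the generators $L_Q$ in the sense of Section~\ref{sec:AS}, and $S=K[x_1,\ldots,x_n]$ is the coordinate ring of $H\simeq\PP^{n-1}$. The plan has three steps: produce the $n-1$ forms cutting out the cone, produce one degree-$N$ form from the interpolation criterion, and then check that these $n$ forms cut out $Z$ scheme-theoretically.

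\emph{The cone.} Let $G_1,\ldots,G_{n-1}\in S$ be forms of degrees $d_1,\ldots,d_{n-1}$ generating $I_X\subset S$. Regarded as forms on $\PP^n$ not involving $x_0$, they generate the ideal of the cone $C_O(X)$. The quotient $K[x_0,\ldots,x_n]/(G_1,\ldots,G_{n-1})=(S/I_X)[x_0]$ is Cohen--Macaulay of dimension $2$. So the $G_i$ form a regular sequence, and $C_O(X)$ is a reduced complete intersection curve of type $(d_1,\ldots,d_{n-1})$, namely the union of the $|X|$ lines $L_Q$.

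\emph{The degree-$N$ form.} By hypothesis $\ev_{N-1}$ is surjective. Exactly as in the proof of Corollary~\ref{cor:AS-general-position}, multiplying by a linear form $\ell\in S_1$ nonvanishing on $X$ (after rescaling the prescribed values) shows that $\ev_N$ is surjective too. So condition \eqref{eq:AS-interpolation} holds for any choice of the coefficient vectors $(\mu_Q)$ and $(\nu_Q)$. Proposition~\ref{prop:AS-interpolation} then gives a form $F=x_0^N-x_0M+Q$ of degree $N$ with $V(F)\cap L_Q=B_Q$ scheme-theoretically for every $Q$, and $F$ contains none of the $L_Q$.

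\emph{Complete intersection.} Since $F$ vanishes identically on no component of the reduced curve $C_O(X)$, it is a nonzerodivisor modulo $(G_1,\ldots,G_{n-1})$. Hence $G_1,\ldots,G_{n-1},F$ is a regular sequence and defines a zero-dimensional complete intersection scheme $W$ of degree $N\prod d_i=N|X|$. Set-theoretically, $W=\bigcup_Q (V(F)\cap L_Q)=\bigcup_Q B_Q=Z$. Since $|Z|=N|X|=\deg W$, every point has multiplicity one, so $W$ is reduced and $W=Z$.

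I expect the only delicate point to be this last step, namely that $Z$ is the full scheme-theoretic intersection rather than only its support. The degree count above handles it. Alternatively, one can argue locally: at a point of $B_Q$ with $B_Q\not\ni O$, the cone is locally the single smooth line $L_Q$ (the lines meet only at $O$), and $F|_{L_Q}$ has a simple zero there.
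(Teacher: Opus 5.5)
Your proposal is correct and follows the paper's proof: the $G_i$ cut out the cone, multiplying by a linear form $\ell$ upgrades surjectivity from degree $N-1$ to degree $N$, and Proposition~\ref{prop:AS-interpolation} then supplies the degree-$N$ form $F$. Your final step (showing $F$ is a nonzerodivisor modulo $(G_1,\ldots,G_{n-1})$, then the length count $N|X|=|Z|$) makes rigorous the scheme-theoretic equality $Z=C_O(X)\cap V(F)$, which the paper asserts in a single line.
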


\begin{proof}
Let $F_1,\ldots,F_{n-1}$ be homogeneous equations defining $X$ in $\PP^{n-1}$. Viewing the $F_i$ as forms on $\PP^n$ which do not involve the coordinate of the vertex, their common zero locus is $C_O(X)$. Hence the cone is a complete intersection of type $(d_1,\ldots,d_{n-1})$.

The assumed surjectivity in degree $N-1$ also implies surjectivity in degree $N$. Indeed, choose a linear form $\ell$ which does not vanish at any point of $X$. Given arbitrary values $(a_Q)_{Q\in X}$, surjectivity in degree $N-1$ gives a form $M$ such that
\[
M(Q)=\frac{a_Q}{\ell(Q)}
\qquad\text{for every }Q\in X.
\]
Then $\ell M$ has degree $N$ and satisfies
$(\ell M)(Q)=a_Q$ for every $Q\in X$.

Hence both evaluation conditions of
Proposition~\ref{prop:AS-interpolation} are satisfied, and there exists a hypersurface $F$ of degree $N$ containing all the prescribed Artin--Schreier orbits and none of the generators. Moreover, its restriction to every generator cuts out exactly the prescribed orbit. Consequently $Z=C_O(X)\cap V(F)$  scheme-theoretically, and $Z$ is a complete intersection of type $(d_1,\ldots,d_{n-1},N)$.
\end{proof}

For geproci sets, the direction set appearing after a general projection is itself a general projection of the set from which the cone was constructed. This gives the lifting theorem.

%\begin{theorem}
%\label{thm:geproci-lifting}
%Let $X\subset\PP^n$ be a geproci set whose general projection to $\PP^{n-1}$ is a complete intersection of type $(d_1,\ldots,d_{n-1})$. Let $N=p^e$, take a cone over $X$ in $\PP^{n+1}$, and place an $\FF_N$-Artin--Schreier orbit on each generator.
%If the evaluation map on a general projection of $X$ is surjective in degree $N-1$, then the resulting set $Z$ is a geproci set in $\PP^{n+1}$, whose general projection is a complete intersection of type $(d_1,\ldots,d_{n-1},N)$.
%\end{theorem}

\begin{theorem}
\label{thm:geproci-lifting}
Let $X\subset\PP^n$ be a geproci set whose general projection to $\PP^{n-1}$ is a complete intersection of type $(d_1,\ldots,d_{n-1})$. Let $N=p^e$. Take a cone $C_O(X)$ over $X$ in $\PP^{n+1}$, and place an $\FF_N$-Artin--Schreier orbit on each
generator. Let $Z \subset \PP^{n+1}$ be the union of these orbits.

Let $Y$ be a general projection of $X$ to $\PP^{n-1}$. If the evaluation map on $Y$ is surjective in degree $N-1$, then  $Z$ is a geproci set whose general projection is a complete intersection of type $(d_1,\ldots,d_{n-1},N)$.
\end{theorem}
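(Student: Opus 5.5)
The plan is to reduce to Proposition~\ref{prop:AS-lifting} after a general projection. Let $P\in\PP^{n+1}$ be a general point and $\pi_P\colon\PP^{n+1}\dashrightarrow H\simeq\PP^n$ the projection. Since $P$ is general, it lies on none of the generators $L_Q=\langle O,Q\rangle$, $Q\in X$. Moreover, no generator lies in the plane spanned by $P$ and another generator, so the projected lines $\pi_P(L_Q)$ are distinct. They pass through the common point $O'=\pi_P(O)$. By Lemma~\ref{lem:AS-projection}, $\pi_P(Z)$ is an $\FF_N$-Artin--Schreier configuration supported on these $|X|$ concurrent lines.

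The key step is to identify the direction set. I will choose a hyperplane $H'\subset H$ not containing $O'$, so $H'\simeq\PP^{n-1}$, and intersect the projected lines with it. The claim is that the direction set $\Gamma_P=\{\pi_P(L_Q)\cap H'\}$ is, up to a projectivity, a general projection of $X\subset\PP^n$ to $\PP^{n-1}$. The geometric reason is that the composite map sending $L_Q$ to its direction point is the projection from the line $\ell=\langle O,P\rangle$. Indeed, $\pi_P(L_Q)\cap H'$ is the image of the plane $\langle O,P,Q\rangle=\langle \ell,Q\rangle$. Restricting to the hyperplane $\PP^n\supset X$ used for the cone, $\ell$ meets $\PP^n$ in a single point $P'=\ell\cap\PP^n$. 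Projection from $\ell$, restricted to $\PP^n$, is projection from $P'$ to $\PP^{n-1}$. As $P$ ranges over a dense open subset of $\PP^{n+1}$, the point $P'$ ranges over a dense open subset of $\PP^n$. Hence $\Gamma_P$ is projectively equivalent to a general projection $Y$ of $X$.

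Consequently $\Gamma_P$ is a reduced complete intersection of type $(d_1,\ldots,d_{n-1})$ in $\PP^{n-1}$, with $|\Gamma_P|=|X|$. This uses that general projections of $X$ are injective on $X$, and that the complete-intersection property and the hypothesis hold on a dense open set of centers, so the two open conditions can be intersected. The evaluation map $H^0(\mathcal O_{\PP^{n-1}}(N-1))\to K^{|X|}$ at $\Gamma_P$ is surjective by hypothesis. Surjectivity of evaluation is an open condition in the points, so assuming it for a general $Y$ is enough.

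Now $\pi_P(Z)\subset\PP^n$ is exactly the situation of Proposition~\ref{prop:AS-lifting}. We have the cone with vertex $O'$ over the complete intersection $\Gamma_P\subset H'$, with an Artin--Schreier orbit on each generator, and with surjective evaluation in degree $N-1$. Therefore $\pi_P(Z)$ is a complete intersection of type $(d_1,\ldots,d_{n-1},N)$. It remains to check that $Z$ is nondegenerate, which holds because $X$ spans $\PP^n$ and the orbits span the cone. This gives that $Z$ is geproci.

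The main obstacle is the second step: justifying carefully that the direction set of the projected cone is a general projection of $X$. This requires checking that the center $P'$ is general when $P$ is general, and that the projectivity identifying $H'$ with $\PP^{n-1}$ preserves both the complete-intersection type and the surjectivity of evaluation. Both properties are invariant under projectivities, so I expect this to be a bookkeeping point rather than a real difficulty.
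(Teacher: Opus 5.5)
Your proposal is correct and follows essentially the same route as the paper. Both arguments identify the direction set of the projected cone with the projection of $X$ from the general point $\langle O,P\rangle\cap\PP^n$, apply Lemma~\ref{lem:AS-projection} and Proposition~\ref{prop:AS-lifting}, and get nondegeneracy from the generators spanning $\PP^{n+1}$; your extra checks (distinctness of the projected lines, intersecting open conditions on centers, projective equivalence of the target hyperplanes) just make explicit what the paper leaves implicit.
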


\begin{proof}
Let $\pi_P\colon\PP^{n+1}\dashrightarrow\PP^n$ be a general projection, and set  $R=\langle O,P\rangle\cap\PP^n.$ The generators of the cone project to distinct lines through $\pi_P(O)$. Projecting these lines from $\pi_P(O)$ is equivalent to projecting $X$ from $R$. Since $P$ is general, so is $R$; hence their directions form a general projection $\Gamma\subset\PP^{n-1}$ of $X$. By hypothesis, $\Gamma$ is a complete intersection of type $(d_1,\ldots,d_{n-1})$.

By Lemma~\ref{lem:AS-projection}, the $N$ points on each generator project to an $\FF_N$-Artin--Schreier orbit on the corresponding line. The union of the projected supporting lines is the cone over $\Gamma$. By the assumed surjectivity in degree $N-1$, Proposition~\ref{prop:AS-lifting} applies and shows that $\pi_P(Z)$ is a complete intersection of type $(d_1,\ldots,d_{n-1},N).$ Thus the general projection of $Z$ is a complete intersection.

Finally, $Z$ is nondegenerate. The generators span $\PP^{n+1}$, since $X$ is nondegenerate in $\PP^n$  by the definition of geproci sets. Moreover, each generator contains $N\geq2$ distinct points of $Z$, and hence is contained in the linear span of $Z$. Therefore $Z$ spans $\PP^{n+1}$.
\end{proof}

\begin{example}
\label{ex:P4-four-points}
Let $X\subset\PP^3$ consist of four general points. Then $X$ is $(2,2)$-geproci. Its general projection consists of four general points in $\PP^2$. Hence, for every power $N=p^e$ with $N\geq3$, the evaluation map in degree $N-1$ is surjective.

Placing an $\FF_N$-Artin--Schreier orbit on each of the four generators of a cone over $X$ gives a geproci set of $4N$ points in $\PP^4$, whose general projection is a complete intersection of type $(2,2,N)$.  For $N=3$, this recovers the twelve-point $(2,2,3)$-geproci configuration of Example~\ref{ex:P4-twelve-points}, now viewed as the first instance of the lifting construction.

Notice that the starting configuration need not itself be an Artin--Schreier configuration.
\end{example}

\begin{remark}
\label{rem:why-higher-AS}
The use of extension-field Artin--Schreier orbits is essential to
the iteration. Replacing a single $\FF_{p^e}$-orbit by an arbitrary
union of ordinary $\FF_p$-orbits would produce a polynomial with
many intermediate coefficients, and hence a substantially more
complicated interpolation problem.

For an affine copy of $\FF_{p^e}$, on the other hand, the identity
$\prod_{a\in\FF_{p^e}}(t-a)=t^{p^e}-t$
shows that the defining polynomial still has only the two
nonleading coefficients appearing in
Proposition~\ref{prop:AS-interpolation}. At the same time, by
passing to sufficiently large extension fields, the interpolation
degree can be made arbitrarily large, which is what makes the
iteration possible.
\end{remark}

We now iterate the lifting construction. The only numerical input is
the regularity of a zero-dimensional complete intersection. If
$\Gamma\subset\PP^r$ is a complete intersection of type
$(d_1,\ldots,d_r)$, then
\[
\operatorname{reg}(S/I_\Gamma)
=\sum_{i=1}^r(d_i-1).
\]
Thus the evaluation map on $\Gamma$ is surjective in every degree
at least $\operatorname{reg}(S/I_\Gamma)$.

We first treat characteristics different from $2$.

\begin{proposition}
\label{prop:every-dimension-odd}
Let $K$ be an algebraically closed field of characteristic $p\geq3$. For every $n\geq3$, there exists a
geproci set of
$3p^{(n-1)(n-2)/2}$
points in $\PP^n$ whose general projection is a complete
intersection of type
$(3,p,p^2,\ldots,p^{n-2}).$
\end{proposition}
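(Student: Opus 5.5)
We argue by induction on $n$, using Theorem~\ref{thm:P3-characterization} for the base case and Theorem~\ref{thm:geproci-lifting} for each step up in dimension.

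\emph{Base case $n=3$.} Take $N=p$ and $q=3$. Since $p\geq3$, we have $q\leq N$. Choose three concurrent lines spanning $\PP^3$ and place an $\FF_p$-Artin--Schreier orbit on each. By Proposition~\ref{prop:P3-sufficient}, this gives a $(3,p)$-geproci set $Z_3$ of $3p$ points. This agrees with the formula, since $(n-1)(n-2)/2=1$ when $n=3$.

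\emph{Inductive step.} Assume $Z_n\subset\PP^n$ is geproci, with general projection a complete intersection of type $(3,p,\ldots,p^{n-2})$ and $3p^{(n-1)(n-2)/2}$ points. Apply Theorem~\ref{thm:geproci-lifting} with $X=Z_n$ and $N=p^{n-1}$. Let $Y\subset\PP^{n-1}$ be a general projection of $Z_n$. Then $Y$ is a complete intersection of type $(3,p,\ldots,p^{n-2})$, so by the regularity formula recorded above,
\[
\operatorname{reg}(S/I_Y)=2+\sum_{i=1}^{n-2}(p^i-1).
\]
We must check that this is at most $N-1=p^{n-1}-1$. Equivalently, we need
\[
\sum_{i=1}^{n-2}p^i-(n-2)+3\leq p^{n-1}.
\]
Since $\sum_{i=1}^{n-2}p^i=(p^{n-1}-p)/(p-1)\leq (p^{n-1}-p)/2$ for $p\geq3$, it suffices that $(p^{n-1}-p)/2+5-n\leq p^{n-1}$. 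This rearranges to $p^{n-1}+p\geq 10-2n$, which holds for every $n\geq3$.

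So the evaluation map on $Y$ is surjective in degree $N-1$, and Theorem~\ref{thm:geproci-lifting} shows that the resulting set $Z_{n+1}\subset\PP^{n+1}$ is geproci of type $(3,p,\ldots,p^{n-2},p^{n-1})$.

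\emph{Point count.} We have $|Z_{n+1}|=p^{n-1}|Z_n|=3p^{(n-1)(n-2)/2+(n-1)}=3p^{n(n-1)/2}$, which matches the formula with $n+1$ in place of $n$.

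\emph{Main obstacle.} The only step that needs care is the regularity inequality, and in particular that it holds already at the first lift. For $n=3$ we have $Y$ of type $(3,p)$ in $\PP^2$, with regularity $p+1$, while $N-1=p^2-1$; indeed $p+1\leq p^2-1$ for $p\geq 2$. Beyond that, one should confirm that the surjectivity hypothesis of Theorem~\ref{thm:geproci-lifting} concerns a general projection of $X$, which is exactly a complete intersection by the geproci property of $X$. No further genericity is needed, since surjectivity in degree at least the regularity holds for every complete intersection.
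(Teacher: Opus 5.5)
Your proposal is correct and follows the paper's proof essentially step for step. The paper uses the same induction, the same base case of three non-coplanar concurrent lines with $\FF_p$-orbits (your citation of the sufficiency proposition is exactly the direction of the $\PP^3$ characterization the paper invokes), the same lift with $N=p^{n-1}$ justified by the complete-intersection regularity bound, and the same point count. The only difference is that you check the inequality via the cruder bound $\sum_{i=1}^{n-2}p^i\le (p^{n-1}-p)/2$ rather than the paper's exact closed form, which is equally valid.
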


\begin{proof}
For $n=3$, choose three concurrent non-coplanar lines and an
$\FF_p$-Artin--Schreier orbit on each. By
Theorem~\ref{thm:P3-characterization}, this gives a  $(3,p)$-geproci set.

Suppose that $Z_m\subset\PP^m$ has been constructed and that its
general projection $\Gamma_m\subset\PP^{m-1}$ is a complete
intersection of type 
$(3,p,p^2,\ldots,p^{m-2}).$ 
Its coordinate ring has regularity
\[
\operatorname{reg}(S/I_{\Gamma_m})
=
2+\sum_{i=1}^{m-2}(p^i-1).
\]
For $p\geq3$,
\[
p^{m-1}-3-\sum_{i=1}^{m-2}p^i
=
\frac{(p-2)p^{m-1}-2p+3}{p-1}>0.
\]
Hence
\[
\operatorname{reg}(S/I_{\Gamma_m})
=
2+\sum_{i=1}^{m-2}(p^i-1)
\leq
2+\sum_{i=1}^{m-2}p^i
<
p^{m-1}-1.
\]
Thus the evaluation map on $\Gamma_m$ is surjective in degree
$p^{m-1}-1$. Take the cone over $Z_m$ in $\PP^{m+1}$ and place an
$\FF_{p^{m-1}}$-Artin--Schreier orbit on each generator.
Theorem~\ref{thm:geproci-lifting} gives a  geproci set $Z_{m+1}$ whose general projection is a complete intersection
of type
$(3,p,p^2,\ldots,p^{m-1}).$
This completes the induction.

Finally,
$|Z_n|
=
3p^{1+2+\cdots+(n-2)}
=
3p^{(n-1)(n-2)/2}.$
\end{proof}

In characteristic $2$, the same construction starts with $\FF_4$ rather than $\FF_2$.

\begin{proposition}
\label{prop:every-dimension-char2}
Let $K$ be an algebraically closed field of characteristic $2$. For every $n\geq3$, there exists a geproci set of $3\cdot2^{n(n-1)/2-1}$ points in $\PP^n$ whose general projection is a complete intersection of type $(3,4,8,\ldots,2^{n-1}).$
\end{proposition}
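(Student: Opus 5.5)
Mirror the proof of Proposition~\ref{prop:every-dimension-odd}, inducting on $n$. In the base case $n=3$ we take three concurrent non-coplanar lines in $\PP^3$ and place an $\FF_4$-Artin--Schreier orbit on each. Since $q=3\leq 4=N$, Theorem~\ref{thm:P3-characterization} (or directly Proposition~\ref{prop:P3-sufficient}) shows this is a $(3,4)$-geproci set of $12=3\cdot 2^{2}$ points, matching $3\cdot2^{n(n-1)/2-1}$ at $n=3$. We use $\FF_4$ rather than $\FF_2$ because with $N=2$ the bound $q\leq N$ excludes three lines spanning $\PP^3$.

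For the inductive step, suppose $Z_m\subset\PP^m$ is geproci with general projection $\Gamma_m\subset\PP^{m-1}$ a complete intersection of type $(3,4,8,\ldots,2^{m-1})$. Its regularity is
\[
\operatorname{reg}(S/I_{\Gamma_m})=2+\sum_{i=2}^{m-1}(2^i-1)=2+(2^m-4)-(m-2)=2^m-m.
\]
We take $N=2^m$, so the interpolation degree is $N-1=2^m-1$. Since $m\geq3$, we have $2^m-m\leq 2^m-1$, so the evaluation map on $\Gamma_m$ is surjective in degree $N-1$. Theorem~\ref{thm:geproci-lifting}, applied with $X=Z_m$ and $N=2^m$, then gives a geproci set $Z_{m+1}\subset\PP^{m+1}$ whose general projection is a complete intersection of type $(3,4,\ldots,2^{m-1},2^m)$. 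This completes the induction.

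For the count, $|Z_{m+1}|=2^m|Z_m|$, so
\[
|Z_n|=12\cdot 2^{3+4+\cdots+(n-1)}=3\cdot 2^{2+\frac{(n-1)n}{2}-3}=3\cdot2^{n(n-1)/2-1}.
\]
The only point needing care is the regularity inequality. It holds with plenty of room, since it reduces to $m\geq1$. The actual obstacle, the failure for $\FF_2$, is already handled by starting from $\FF_4$.
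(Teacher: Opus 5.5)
Your proposal is correct and follows the paper's proof essentially step for step. You use the same $\FF_4$ base case via Theorem~\ref{thm:P3-characterization}, the same regularity bound $2^m-m\leq 2^m-1$ on the $(3,4,\ldots,2^{m-1})$ complete intersection, the same lifting step with $\FF_{2^m}$-orbits via Theorem~\ref{thm:geproci-lifting}, and the same point count.
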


\begin{proof}
For $n=3$, choose three concurrent non-coplanar lines and an $\FF_4$-Artin--Schreier orbit on each. By Theorem~\ref{thm:P3-characterization}, this gives a $(3,4)$-geproci set.

Suppose that $Z_m\subset\PP^m$ has been constructed and that its general projection $\Gamma_m\subset\PP^{m-1}$ is a complete intersection of type
$(3,4,8,\ldots,2^{m-1}).$
Its coordinate ring has regularity
\[
\operatorname{reg}(S/I_{\Gamma_m})
=
2+\sum_{i=2}^{m-1}(2^i-1)
=
2^m-m
\leq 2^m-1.
\]
Hence the evaluation map on $\Gamma_m$ is surjective in degree $2^m-1$.

Take the cone over $Z_m$ in $\PP^{m+1}$ and place an $\FF_{2^m}$-Artin--Schreier orbit on each generator. Theorem~\ref{thm:geproci-lifting} gives a geproci set $Z_{m+1}$ whose general projection is a complete intersection of type $(3,4,8,\ldots,2^m).$
This completes the induction.

Finally,
$|Z_n|
=
3\cdot2^{2+3+\cdots+(n-1)}
=
3\cdot2^{n(n-1)/2-1}.$
\end{proof}

Combining the two constructions gives the main existence result.

\begin{theorem}
\label{thm:every-dimension}
Let $K$ be an algebraically closed field of positive characteristic. For every $n\geq3$, there  exist infinitely many geproci sets in $\PP^n$.
\end{theorem}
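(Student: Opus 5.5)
Existence of at least one geproci set in every $\PP^n$, $n\geq3$, is already given by Proposition~\ref{prop:every-dimension-odd} (for $p\geq3$) and Proposition~\ref{prop:every-dimension-char2} (for $p=2$). So the task is to produce infinitely many. The approach is to vary the Artin--Schreier exponents, so that the number of points (or the complete intersection type) takes infinitely many values. Sets of different cardinality are obviously distinct, even up to projective equivalence.

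\emph{Base case $n=3$.} Fix $p$ and let $e\geq1$ range over all integers with $p^e\geq3$; this excludes only $e=1$ when $p=2$. Take three concurrent non-coplanar lines and an $\FF_{p^e}$-Artin--Schreier orbit on each. By Theorem~\ref{thm:P3-characterization} (or directly Proposition~\ref{prop:P3-sufficient}), since $3\leq p^e$, this is a $(3,p^e)$-geproci set of $3p^e$ points. Different $e$ give different cardinalities, so $\PP^3$ already contains infinitely many geproci sets.

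\emph{Inductive step $n\to n+1$.} Suppose $Z_n\subset\PP^n$ is geproci with general projection $\Gamma_n\subset\PP^{n-1}$ a complete intersection of type $(d_1,\ldots,d_{n-1})$. Its regularity $\rho=\sum(d_i-1)$ is finite, so the evaluation map on $\Gamma_n$ is surjective in every degree $\geq\rho$. For every $e$ with $p^e-1\geq\rho$, which holds for all sufficiently large $e$, apply Theorem~\ref{thm:geproci-lifting} with $N=p^e$. This gives a geproci set $Z_{n+1}^{(e)}\subset\PP^{n+1}$ of type $(d_1,\ldots,d_{n-1},p^e)$ with $|Z_n|\,p^e$ points. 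Since infinitely many $e$ qualify, already a single geproci set in $\PP^n$ yields infinitely many geproci sets in $\PP^{n+1}$, pairwise distinct because their cardinalities differ.

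Combining the base case (or either of the Propositions) with this step by induction on $n$ proves the theorem. The only point needing care is checking that the hypothesis of Theorem~\ref{thm:geproci-lifting} holds for the chosen $N$, namely surjectivity of evaluation on a general projection $Y$ of $Z_n$ in degree $N-1$. This reduces to the regularity bound for zero-dimensional complete intersections, exactly as in the proofs of the two propositions. There is no real obstacle beyond this: the choice of $e$ is free once $p^e>\rho$.
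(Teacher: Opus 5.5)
Your proposal is correct and is essentially the paper's proof: for $n=3$ you vary $N=p^e$ in the Artin--Schreier configuration on three concurrent non-coplanar lines, and for $n\geq4$ you take a fixed geproci set in $\PP^{n-1}$ and apply Theorem~\ref{thm:geproci-lifting} with $N=p^e$ for every $e$ such that $p^e-1$ is at least the regularity $\sum(d_i-1)$ of its general projection. The differences are cosmetic: you distinguish the resulting sets by cardinality rather than by complete-intersection type, and your induction on $n$ is unnecessary because Propositions~\ref{prop:every-dimension-odd} and~\ref{prop:every-dimension-char2} already supply the starting set in every dimension.
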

%\begin{proof} This follows immediately from Propositions~\ref{prop:every-dimension-odd} and~\ref{prop:every-dimension-char2}. \end{proof}
\begin{proof}
For $n=3$, the assertion follows from Example~\ref{ex:P3-family}, by allowing $N=p^e$ to vary.

Let $n\geq4$. Propositions~\ref{prop:every-dimension-odd} and~\ref{prop:every-dimension-char2} provide, according to the characteristic, a geproci set $X\subset\PP^{n-1}$. The evaluation map on a general projection of $X$ is surjective in every sufficiently large degree. Hence Theorem~\ref{thm:geproci-lifting} applies with $N=p^e$ for every sufficiently large $e$. Varying $e$ gives geproci sets in $\PP^n$ with distinct complete-intersection types, and hence infinitely many geproci sets.
\end{proof}

\begin{example}
\label{ex:P4-char2}
In characteristic $2$, start with a $(3,4)$-geproci
$\FF_4$-Artin--Schreier configuration
$Z_3\subset\PP^3$ of twelve points. Take a cone over $Z_3$ in
$\PP^4$, and place an $\FF_8$-Artin--Schreier orbit on each of
its twelve generators.

After a general projection to $\PP^3$, the directions of the
supporting lines form a general projection of $Z_3$, hence a
complete intersection of type $(3,4)$ in $\PP^2$. Its coordinate
ring has regularity 
$(3-1)+(4-1)=5,$
so the evaluation map is surjective in degree $7$. Therefore
Theorem~\ref{thm:geproci-lifting} gives a complete intersection
of type $(3,4,8)$. Thus we obtain a geproci set of
$96$ points in $\PP^4$.
\end{example}

\begin{remark}
\label{rem:fixed-characteristic}
%The construction takes place over a fixed characteristic. What grows with the dimension is not the characteristic, but the finite field used for the Artin--Schreier orbits. For $p\geq3$ the successive orbit sizes are $p,p^2,p^3,\ldots,$ while in characteristic $2$ they are $4,8,16,\ldots.$ Thus, in every positive characteristic, the construction produces geproci sets in every dimension. {\color{red} JM: This sentence makes it sound like the construction produces geproci sets of points, curves, surfaces, 3-folds, etc. Should we rephrase, like ``in every ambient projective space" or something?}{\color{blue}(maybe: in projective spaces of every dimension.)}
The construction takes place over a fixed characteristic. What grows
with the dimension is not the characteristic, but the finite field used
for the Artin–Schreier orbits. More generally, at each lifting step one
may take an orbit of size $p^e$ for any sufficiently large $e$. Thus the
characteristic may be kept fixed throughout the construction, while the
exponent $e$ is allowed to grow. In particular, in every positive
characteristic the construction produces geproci sets in projective
spaces of arbitrary dimension.
\end{remark}

\section{Further questions}
\label{sec:questions}

The constructions above suggest several natural questions.

\begin{question}
In Theorem~\ref{thm:P3-characterization}, the condition $q\leq N$ is both necessary and sufficient for an $\FF_N$-Artin--Schreier configuration on $q$ concurrent lines spanning $\PP^3$ to be $(q,N)$-geproci.
Is there an analogous intrinsic characterization in higher
dimension?
\end{question}

\begin{question}
How sharp is the regularity bound used in the iterative construction?
In particular, can the Artin--Schreier lifting procedure produce  geproci sets in $\PP^n$ with complete-intersection
degrees smaller than those in
Propositions~\ref{prop:every-dimension-odd} and
\ref{prop:every-dimension-char2}?
\end{question}

\begin{question}
To what extent is the Artin--Schreier structure forced by the geproci property? In particular, can one characterize geproci sets in positive characteristic supported on concurrent lines?
\end{question}

\begin{question}
    Does there exist a finite geproci set in $\PP^n$, for some $n>3$, over an algebraically closed field of characteristic zero?
\end{question}


\begin{thebibliography}{99}

\bibitem{ArtinSchreier1927}
E.~Artin and O.~Schreier,
\emph{Algebraische Konstruktion reeller K\"orper},
Abh. Math. Sem. Univ. Hamburg \textbf{5} (1927), 85--99.


\bibitem{POLITUSbook}
L.~Chiantini, {\L}.~Farnik, G.~Favacchio, B.~Harbourne, J.~Migliore,
T.~Szemberg, and J.~Szpond,
\emph{Configurations of points in projective space and their projections}, Preprint 
\href{https://arxiv.org/abs/2209.04820}{arXiv:2209.04820}.

\bibitem{POLITUS4}
L.~Chiantini, {\L}.~Farnik, G.~Favacchio, B.~Harbourne, J.~Migliore,
T.~Szemberg, and J.~Szpond,
\emph{Finite sets of points in $\PP^4$ with special projection properties},
Geom. Dedicata \textbf{219} (2025), no.~2, Paper No.~27, 29 pp.
%\href{https://arxiv.org/abs/2407.01744}{arXiv:2407.01744}.

\bibitem{ChiantiniMigliore2021}
L.~Chiantini and J.~Migliore,
\emph{Sets of points which project to complete intersections, and unexpected
cones},
Trans. Amer. Math. Soc. \textbf{374} (2021), 2581--2607.

\bibitem{FavacchioKettinger2026}
G.~Favacchio and J.~Kettinger,
\emph{Collinearly complete sets and finite subgroups from configurations of
skew $n$-planes in $\PP^{2n+1}_K$}, Preprint
\href{https://arxiv.org/abs/2607.11259}{arXiv:2607.11259} (2026).

\bibitem{Kettinger2024}
J.~Kettinger,
\emph{The geproci property in positive characteristic},
Proc. Amer. Math. Soc. \textbf{152} (2024), 3229--3242.
%\href{https://arxiv.org/abs/2307.04857}{arXiv:2307.04857}.

\bibitem{StacksAS}
The Stacks Project Authors,
\emph{The Stacks Project, Section 9.25: Artin--Schreier extensions},
\href{https://stacks.math.columbia.edu/tag/09DY}{Tag 09DY}.

\end{thebibliography}
\end{document}